\setlist[description]{leftmargin=0cm,labelindent=0cm}
\newtheorem{theorem}{Theorem}
\newtheorem{corollary}[theorem]{Corollary}
\newtheorem*{corollary*}{Corollary}
\newtheorem{lemma}{Lemma}
\newtheorem*{lemma*}{Lemma}
\newtheorem*{zsigmondy*}{Zsigmondy's theorem}
\newtheorem*{main*}{Main Theorem}
\theoremstyle{remark}
\providecommand{\NNb}{\mathbf{N}}
\providecommand{\PPc}{\mathcal{P}}
\providecommand{\QQb}{\mathbf{Q}}
\providecommand{\QQc}{\mathcal{Q}}
\providecommand{\RRb}{\mathbf{R}}
\providecommand{\ZZb}{\mathbf{Z}}
\providecommand\llb{\llbracket}
\providecommand\rrb{\rrbracket}
\newcommand{\slog}{\mathrm{slog}}
\newcommand{\fixed}[2][1]{%
  \begingroup
  \spaceskip=#1\fontdimen2\font minus \fontdimen4\font
  \xspaceskip=0pt\relax
  #2%
  \endgroup
}
\begin{document}
\title{On the number of distinct prime factors \\ of a sum of super-powers}

\author{Paolo Leonetti}
\address{Department of Decision Sciences, Universit\`a L. Bocconi, via Roentgen 1, 20136 Milano, Italy}
\email{leonetti.paolo@gmail.com}
\urladdr{https://sites.google.com/site/leonettipaolo/}

\author{Salvatore Tringali}
\address{Department of Mathematics, Texas A\&M University at Qatar \\ PO Box 23874 Doha, Qatar}
\curraddr{Institute for Mathematics and Scientific Computing, University of Graz, NAWI Graz | Heinrichstr. 36, 8010 Graz, Austria}
\email{salvatore.tringali@uni-graz.at}
\urladdr{http://imsc.uni-graz.at/tringali}

\subjclass[2010]{Primary: 11A05, 11A41, 11A51. Secondary: 11R27, 11D99.}


\keywords{Integer sequences; number of distinct prime factors; sum of powers; $\mathcal S$-unit equations; super-logarithm and tetration.}

\thanks{P.L. was supported by a PhD scholarship from Universit\`a ``Luigi Bocconi'', and S.T. by NPRP grant No. [5-101-1-025] from the Qatar National Research Fund (a member of Qatar Foundation), by the French ANR Project No. ANR-12-BS01-0011, and by the Austrian FWF Project No. M 1900-N39.}

\begin{abstract}
Given $k, \ell \in {\bf N}^+$, let $x_{i,j}$ be, for $1 \le i \le k$ and $0 \le j \le \ell$, some fixed integers, and define, for every $n \in {\bf N}^+$, $s_n := \sum_{i=1}^k \prod_{j=0}^\ell x_{i,j}^{n^j}$. We prove that the following are equivalent:
\begin{enumerate}[label={\rm (\alph{*})}]
\item There are a real $\theta > 1$ and infinitely many indices $n$ for which the number of distinct prime factors of
$s_n$ is greater than the super-logarithm of $n$ to base $\theta$.
\item There do not exist non-zero integers $a_0,b_0,\ldots,a_\ell,b_\ell $ such that $s_{2n}=\prod_{i=0}^\ell a_i^{(2n)^i}$ and $s_{2n-1}=\prod_{i=0}^\ell b_i^{(2n-1)^i}$ for all $n$.
\end{enumerate}
We will give two different proofs of this result, one based on a theorem of Evertse
(yielding, for a fixed finite set of primes $\mathcal S$, an effective bound on the number of non-degenerate solutions of an $\mathcal S$-unit equation in $k$ variables over the rationals) and the other using only elementary methods.

As a corollary, we find that, for fixed $c_1, x_1, \ldots,c_k, x_k \in \mathbf N^+$, the number of distinct prime factors of $c_1 x_1^n+\cdots+c_k x_k^n$ is bounded, as $n$ ranges over $\mathbf N^+$, if and only if $x_1=\cdots=x_k$.
\end{abstract}

\maketitle
\thispagestyle{empty}
\section{Introduction}\label{sec:introduction}
Given $k, \ell \in {\bf N}^+$, let $x_{i,j}$ be, for $1 \le i \le k$ and $0 \le j \le \ell$, some fixed rationals.
Then, consider the $\QQb$-valued sequence $(s_n)_{n\ge 1}$ obtained by taking
\begin{equation}\label{equ:sequence_type_I}
s_n := \sum_{i=1}^k \prod_{j=0}^\ell x_{i,j}^{n^j}
\end{equation}
for every $n \in \NNb^+$ (notations and terminology, if not explained, are standard or should
be clear from the context); we refer to $s_n$ as a sum of super-powers of degree $\ell$.
Notice that $(s_n)_{n \ge 1}$ includes as a special case any $\QQb$-valued sequence of general term
\begin{equation}
\label{equ:sequence_type_II}
\sum_{i = 1}^k \prod_{j = 1}^{\ell_i} y_{i,j}^{f_{i,j}(n)},
\end{equation}
where, for each $i = 1, \ldots, k$, we let $\ell_i \in \NNb^+$ and $y_{i,1}, \ldots, y_{1,\ell_i} \in \QQb \setminus \{0\}$, while $f_{i,1}, \ldots, f_{i,\ell_i}$ are polynomials in one variable with integral coefficients. Conversely,  sequences of the form \eqref{equ:sequence_type_I} can be viewed as sequences of the form \eqref{equ:sequence_type_II}, the latter being prototypical of scenarios where polynomials are replaced with more general functions $\NNb^+ \to \ZZb$ (see also \S{ }\ref{sec:closings}).

Now, let $\omega(x)$ denote, for each $x \in \ZZb \setminus \{0\}$, the number of distinct prime divisors of $x$, and define $\omega(0):=\infty$. Then, for $x \in \ZZb$ and $y \in \NNb^+$ we set $\omega(xy^{-1}) := \omega(\delta^{-1} x) + \omega(\delta^{-1} y)$, where $\delta$ is the greatest common divisor of $x$ and $y$.

In addition, given $n \ge 2$ and $\theta > 1$, we write $\slog_\theta(n)$ for the super-logarithm of $n$ to base $\theta$, that is, the largest integer $\kappa \ge 0$ for which $\theta^{\otimes \kappa} \le n$, where $\theta^{\otimes 0} := 1$ and $\theta^{\otimes \kappa} := \theta^{\theta^{\otimes (\kappa-1)}}$ for $\kappa \ge 1$;
note that $\slog_\theta(n) \to \infty$ as $n \to \infty$.

The main goal of this paper is to provide necessary and sufficient conditions for the boundedness of the sequence $(\omega(s_n))_{n \ge 1}$. More precisely, we have:

\begin{theorem}\label{th:main}
The following are equivalent:
\begin{enumerate}[label={\rm (\alph{*})}]
\item\label{it:main-theorem(b)} There is a base $\theta > 1$ such that $\omega(s_n)>\slog_\theta(n)$ for infinitely many $n$.
\item\label{it:main-theorem(c)} $\limsup_{n \to \infty} \omega(s_n) = \infty$.
\item\label{it:main-theorem(d)} There do not exist non-zero rationals $a_0,b_0,\ldots,a_\ell,b_\ell$ such that $s_{2n}=\prod_{j=0}^\ell a_j^{(2n)^j}$ and $s_{2n-1}=\prod_{j=0}^\ell b_j^{(2n-1)^j}$ for all $n$.
\end{enumerate}
\end{theorem}
We will give two proofs of Theorem \ref{th:main} in \S{ }\ref{sec:proof}, one based on a theorem of Evertse (yielding, for a fixed finite set of primes $\mathcal S$, an effective bound on the number of non-degenerate solutions of an $\mathcal S$-unit equation in $k$ variables over the rationals), and the other using only elementary methods: It is, in fact, in the second proof that there lies, we hope, the added value of this work.

Results in the spirit of Theorem \ref{th:main} have been obtained by various authors in the special case of $\ZZb$-valued sequences raising from the solution of non-degenerate linear homogeneous recurrence equations with (constant) integer coefficients of order $\ge 2$, namely, in relation to a sequence $(u_n)_{n \ge 1}$ of general term
\begin{equation}
\label{equ:stewarts_equ}
u_n := \alpha_1^n f_1(n)  + \cdots + \alpha_h^n f_h(n),
\end{equation}
where $\alpha_1, \ldots, \alpha_h$ are the non-zero (and pairwise distinct) roots of the characteristic polynomial of the recurrence under consideration, and $f_1, \ldots, f_h$ are non-zero polynomials in one variable with coefficients in the smallest field extension of the rational field containing $\alpha_1, \ldots, \alpha_h$, see \cite[Theorem C.1]{ShTij}.
(A recurrence is non-degenerate if its characteristic polynomial has at least two distinct non-zero complex roots and the ratio of any two distinct non-zero characteristic roots is not a root of unity.)
More specifically, it was shown by van der Poorten and Schlickewei \cite{vanderP82} and, independently, by Evertse \cite[Corollary 3]{evertse84}, using Schlickewei's $p$-adic analogue of Schmidt's Subspace Theorem \cite{schli76}, that the greatest prime factor of $u_n$ tends to $\infty$ as $n \to \infty$.

In a similar note, effective lower bounds on the greatest prime divisor and on the greatest square-free factor of a sequence of type \eqref{equ:stewarts_equ} were obtained under mild assumptions by Shparlinski \cite{Shparli80} and Stewart \cite{Stewart1, Stewart3, Stewart2}, based on variants of Baker's theorem on linear forms in the logarithms of algebraic numbers \cite{BaWu}. Further results in the same spirit can be found in \cite[\S{ }6.2]{EPSW03}.

On the other hand, Luca has shown in \cite{Lu} that if $(v_n)_{n \ge 1}$ is a sequence of rational numbers satisfying a recurrence of the form
$$
g_0(n) v_{n+2} + g_1(n) v_{n+1} + g_2(n) v_n = 0, \quad\text{for all }n \in \NNb^+,
$$
where $g_0$, $g_1$ and $g_2$ are univariate polynomials over the rational field and not all zero, and $(v_n)_{n \ge n_0}$ is not binary recurrent (viz., a solution of a linear homogeneous second-order recurrence equation with integer coefficients) for some $n_0 \in \NNb^+$, then there exists a real constant $c > 0$ such that the product of the numerators and denominators (in the reduced fraction) of the non-zero rational terms of the finite sequence $(v_i)_{1 \le i \le n}$ has at least $c \log n$ prime factors as $n \to \infty$.

Lastly, it seems worth noting that Theorem \ref{th:main} can be significantly improved in special cases. E.g., given $a, b \in \mathbf N^+$ with $a \ne b$, we have by Zsigmondy's theorem \cite{Zsig} that $\omega(n) \ge d(n)-2$ for all $n$, where $d(n)$ is the number of (positive integer) divisors of $n$. Now, it is known, e.g., from \cite{SmSu} that $\frac{1}{n} \sum_{i=1}^n d(i)$ is asymptotic to $\log n$ as $n \to \infty$. So, it follows that there exist a constant $c \in \RRb^+$ and infinitely many $n$ for which $\omega(a^n-b^n)>c\log n$.
\begin{corollary}
\label{cor:(2)}
The sequence $(\omega(s_n))_{n \ge 1}$ is bounded if and only if there exist non-zero rationals $a_0,b_0,\ldots,a_\ell,b_\ell$ such that $s_{2n}=\prod_{j=0}^\ell a_j^{(2n)^j}$ and $s_{2n-1}=\prod_{j=0}^\ell b_j^{(2n-1)^j}$ for all $n$.
\end{corollary}
\begin{corollary}\label{corollary}
Let $c_1, \ldots, c_k \in \QQb^+$ and $x_1,\ldots,x_k \in \QQb \setminus \{0\}$. Then, $(\omega(c_1 x_1^n+\cdots+c_kx_k^n))_{n\ge 1}$ is a bounded sequence only if $|x_1|=\cdots=|x_k|$, and this condition is also sufficient provided that $\sum_{i=1}^k \varepsilon_i c_i \ne 0$, where, for each $i \in \llb 1, k \rrb$, $\varepsilon_i := x_i \cdot |x_i|^{-1}$ is the sign of $x_i$.
\end{corollary}
The proof of Corollary \ref{corollary} is postponed to \S{ }\ref{sec:proof:cor(3)}, while Corollary \ref{cor:(2)} is trivial by Theorem \ref{th:main}.
\subsection*{Notations} We reserve the letters $h$, $i$, $j$, and $\kappa$ (with or without subscripts) for non-negative integers, the letters $m$ and $n$ for positive integers, the letters $p$ and $q$ for (positive rational) primes, and the letters $A$, $B$, and $\theta$ for real numbers. We denote by $\mathbf P$ the set of all (positive rational) primes and by $\upsilon_p(x)$, for $p \in \mathbf P$ and a non-zero $x \in \mathbf Z$, the $p$-adic valuation of $x$, viz., the exponent of the largest power of $p$ dividing $x$. Given $X \subseteq \RRb$, we take $X^+ := X \cap {]0,\infty[}\fixed[0.2]{\text{ }}$. Further notations, if not explained, are standard or should be clear from the context.
%
%
\section{Proof of Theorem \texorpdfstring{\ref{th:main}}{1}}
\label{sec:proof}
The implications \ref{it:main-theorem(b)} $\Rightarrow$ \ref{it:main-theorem(c)} and \ref{it:main-theorem(c)} $\Rightarrow$ \ref{it:main-theorem(d)} are straightforward, and \ref{it:main-theorem(d)} $\Rightarrow$ \ref{it:main-theorem(b)} is trivial if at least one of the sequences $(s_{2n})_{n \ge 1}$ and $(s_{2n-1})_{n \ge 1}$ is eventually zero.

Therefore, we can just focus on the two cases below, in each of which we have to prove that there exists a base $\theta > 1$ such that $\omega(s_n) > \slog_\theta(n)$ for infinitely many $n$.
\vskip 0.3cm
\noindent{}\textbf{Case (i):} \textit{There do not exist $a_0, \ldots, a_\ell \in \QQb$ such that $s_{2n} = \prod_{j=0}^\ell a_j^{(2n)^j}$ for all $n$.} Then $k \ge 2$, $s_n \ne 0$ for infinitely many $n$, and $|x_{i,j}| \ne 1$ for some $(i,j) \in \llb 1, k \rrb \times \llb 1, \ell \rrb$ (otherwise we would have $s_{2n} = \sum_{i=1}^k x_{i,0}$, a contradiction).

Without loss of generality, we can suppose that $x_{i,j} \ne 0$ for all $(i,j) \in \llb 1, k \rrb \times \llb 0, \ell \rrb$ (otherwise we end up with a sum of super-powers with fewer than $k$ summands), and actually that $x_{i,j} > 0$ for $j \ne 0$: This is because $\prod_{j=0}^\ell x_{i,j}^{(2n)^j} = x_{i,0} \cdot \prod_{j=1}^\ell |x_{i,j}|^{(2n)^j}$ for all $n$, and, insofar as we deal with Case (i), we can replace $(s_n)_{n \ge 1}$ with the subsequence $(s_{2n})_{n \ge 1}$, after noticing that $\omega(s_{2n}) > \text{slog}_\theta(n)$, for some $\theta > 1$, only if $\omega(s_{2n}) > \text{slog}_{2\theta}(2n)$, which is easily proved by induction (we omit details). Accordingly, we may also assume
\begin{equation}
\label{equ:ordered}
(x_{1,1},\ldots,x_{1,\ell}) \prec \cdots \prec (x_{k,1},\ldots,x_{k,\ell}),
\end{equation}
where $\prec$ denotes the binary relation on $\RRb^\ell$ defined by taking $(u_1, \ldots, u_\ell) \prec (v_1, \ldots, v_\ell)$ if and only if $|u_i| < |v_i|$ for some $i \in \llb 1, \ell \rrb$ and $|u_j| = |v_j|$ for $i < j \le \ell$ (the $\ell$-tuples $(x_{i,1},\ldots,x_{i,\ell})$ cannot be equal to each other for all $i \in \llb 1, k \rrb$, and on the other hand, if two of these tuples are equal, then we can add up some terms in \eqref{equ:sequence_type_I} so as to obtain a sum of super-powers of degree $\ell$, but again with fewer summands). It follows by \eqref{equ:ordered} that there exists $N \in \NNb^+$ such that
\begin{equation}
\label{equ:subsums}
\sum_{i \in I} \prod_{j=0}^\ell x_{i,j}^{n^j} \ne 0, \quad\text{for all } n \ge N \text{ and } \emptyset \ne I \subseteq \llb 1, k \rrb.
\end{equation}
Now, for each $(i,j) \in \llb 1, k \rrb \times \llb 0, \ell \rrb$ pick $\alpha_{i,j}, \beta_{i,j} \in \ZZb$ such that $\alpha_{i,j} > 0$ and $x_{i,j} = \alpha_{i,j}^{-1} \beta_{i,j}$, and consequently set
$\tilde{x}_{i,j} := \alpha_j x_{i,j}$, where $\alpha_j := \alpha_{1,j} \cdots \alpha_{k,j}$; note that $\tilde{x}_{i,j}$ is a non-zero integer, and $\tilde{x}_{i,j} > 0$ for $j \ne 0$. Then, let $
u_n := \sum_{i = 1}^k \prod_{j=0}^\ell \tilde{x}_{i,j}^{n^j}$
and $v_n := \prod_{j=0}^\ell \alpha_j^{\fixed[0.15]{\text{ }}n^j}$, so that $s_n = u_n v_n^{-1}$.

Clearly, $(u_n)_{n \ge 1}$ and $(v_n)_{n \ge 1}$ are integer sequences, and $(\tilde{x}_{i,1},\ldots,\tilde{x}_{i,\ell}) \prec (\tilde{x}_{j,1},\ldots,\tilde{x}_{j,\ell})$ for $1\le i < j \le k$. Moreover,
$\omega(s_n) \ge \omega(u_n) - \omega(v_n) = \omega(u_n) - \omega(v_1)$ for all $n$. This shows that it is sufficient to prove the existence of a base $\theta > 1$ such that $\omega(u_{n}) > \text{slog}_\theta(n)$ for infinitely many $n$, and it entails, along with the rest, that we can further assume that $x_{i,j}$ is a non-zero \textit{integer} for every $(i,j) \in \llb 1, k \rrb \times \llb 0, \ell \rrb$.

We claim that it is also enough to assume $\delta_0 = \cdots = \delta_\ell = 1$, where for each $j \in \llb 0, \ell \rrb$ we let $\delta_j := \gcd(x_{1,j},\ldots,x_{k,j})$.
In fact, define, for $1 \le i \le k$ and $0 \le j \le \ell$, $\xi_{i,j} := \delta_j^{-1}x_{i,j}$, and let $(w_n)_{n \ge 1}$ and $(\tilde{s}_n)_{n \ge 1}$ be the integer sequences of general term $\prod_{j=0}^\ell \delta_j^{n^j}$ and $\sum_{i=1}^k \prod_{j=0}^\ell \xi_{i,j}^{n^j}$, respectively. Then $s_{n} = w_{n} \tilde{s}_{n}$, and hence $\omega(s_{n}) \ge \omega(\tilde{s}_{n})$. On the other hand, there cannot exist $\tilde{a}_0, \ldots, \tilde{a}_\ell \in \ZZb$ such that $\tilde{s}_{2n} = \prod_{j=0}^\ell \tilde{a}_j^{(2n)^j}$ for all $n$, or else we would have $s_{2n} = \prod_{j=0}^\ell (\delta_j \fixed[0.15]{\text{ }}\tilde{a}_j)^{(2n)^j}$ for every $n$ (which is impossible). This leads to the claim.

With the above in mind, let $\mathcal{P}$ be the set of all (positive) prime divisors of $\mathfrak{z} := \prod_{i=1}^k \prod_{j=1}^\ell x_{i,j}$; observe that $\mathcal{P}$ is finite and non-empty, as the preceding considerations yield $|\mathfrak{z}| \ge 2$. Then
\begin{equation}
\label{equ:rewriting}
s_{n}=\sum_{i=1}^k \!\left(x_{i,0} \prod_{p \in \mathcal{P}} p^{e_p^{(i)}(n)}\right)\fixed[0.2]{\text{ }}\!\!,
\quad\text{for every }n \in \mathbf N^+,
\end{equation}
where
$e_p^{(i)}$ denotes, for all $p \in \mathbf P$ and $i \in \llb 1, k \rrb$, the function
$
\NNb^+ \to \NNb: n \mapsto \sum_{j=1}^\ell n^j \upsilon_p(x_{i,j})$.

Since $\delta_0 = \cdots = \delta_\ell = 1$, it is easily seen that for every $p \in \mathbf P$ there are $i,j \in \llb 1, k \rrb$ for which $e_p^{(i)} \ne e_p^{(j)}$, and there exist $i_p \in \llb 1, k \rrb$ and $n_p \ge N$ such that $e_p^{(i_p)}(n) < e_p^{(i)}(n)$ for all $n \ge n_p$ and $i \in \llb 1, k \rrb$ for which $e_p^{(i)} \ne e_p^{(i_p)}$.
Let $n_\PPc := \max_{p \in \mathcal P} n_p$ (recall that $\mathcal P$ is a non-empty finite set), and for each $p \in \mathcal P$ and $i \in \llb 1, k \rrb$ define $
\Delta e_p^{(i)} := e_p^{(i)} - e_p^{(i_p)}$. Then set
\begin{equation}
\label{equ:reduced_sequence}
\pi_n := \prod_{p \in \mathcal{P}} p^{e_p^{(i_p)}(n)}
\quad\text{and}\quad
\sigma_n := \sum_{i=1}^k \!\left(x_{i,0} \prod_{p \in \mathcal{P}} p^{\Delta e_p^{(i)}(n)}\right)\fixed[0.2]{\text{ }}\!\!.
\end{equation}
We have $|s_{n}| = \pi_{n} \cdot |\sigma_{n}|$, and we obtain from \eqref{equ:subsums} that $\sigma_n \in \mathbf Z \setminus \{0\}$ for $n \ge n_\PPc$.
Further\-more, having assumed $x_{i,j} > 0$ for all $(i,j) \in \llb 1, k \rrb \times \llb 1, \ell \rrb$ implies, together with \eqref{equ:ordered} and \eqref{equ:rewriting}, that
\begin{equation}
\label{equ:limit}
\lim_{n \to \infty} \prod_{p \in \mathcal P} p^{e_p^{(k)}(n) - e_p^{(i)}(n)} =
\lim_{n \to \infty} \prod_{j=1}^\ell \left(\frac{x_{k,j}}{x_{i,j}}\right)^{\!n^j} = \infty, \quad\text{for each }i \in \llb 1, k-1 \rrb.
\end{equation}
Consequently, we find that
\begin{equation}
\label{equ:asymptotic_for_s2n}
|s_n| \sim |x_{k,0}| \cdot \prod_{j=1}^\ell x_{k,j}^{n^j} = |x_{k,0}| \cdot \prod_{p \in \mathcal P} p^{e_p^{(k)}(n)},\quad\text{as }n \to \infty
\end{equation}
and
\begin{equation}
\label{equ:asymptotic_for_sigma2n}
|\sigma_{n}| = \frac{|s_{n}|}{\pi_{n}} \sim |x_{k,0}| \cdot \prod_{p \in \mathcal P} p^{\Delta e_p^{(k)}(n)},\quad\text{as }n \to \infty.
\end{equation}
We want to show that the sequence $(|\sigma_{n}|)_{n \ge 1}$ is eventually (strictly) increasing.
\begin{lemma}
\label{lem:1}
There exists $p \in \PPc$ such that $\Delta e_p^{(k)}(n) \to \infty$ as $n \to \infty$.
\end{lemma}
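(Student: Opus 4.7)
My plan is to argue by contradiction using the strict $\prec$-ordering of the tuples $T_i := (x_{i,1},\ldots,x_{i,\ell})$, which satisfy $T_1 \prec T_2 \prec \cdots \prec T_k$ with $k \ge 2$. Since $\Delta e_p^{(k)}$ is a polynomial in $n$ that is eventually non-negative (by the minimality property of $i_p$), it tends to $\infty$ if and only if it has degree at least $1$, equivalently if and only if $\upsilon_p(x_{k,j}) > \upsilon_p(x_{i_p,j})$ for some $j \ge 1$. Assume to the contrary that for every $p \in \PPc$ the polynomial $\Delta e_p^{(k)}$ has degree $\le 0$, so $\upsilon_p(x_{k,j}) = \upsilon_p(x_{i_p,j})$ for every $j \ge 1$.

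The minimality of $i_p$ for large $n$ translates into a lex-from-top comparison of polynomial coefficients, which combined with the assumption just made gives, for every $p \in \PPc$ and every $i \in \llb 1, k \rrb$,
\[
(\upsilon_p(x_{k,\ell}),\upsilon_p(x_{k,\ell-1}),\ldots,\upsilon_p(x_{k,1})) \le_{\mathrm{lex}} (\upsilon_p(x_{i,\ell}),\upsilon_p(x_{i,\ell-1}),\ldots,\upsilon_p(x_{i,1})).
\]
I then perform a downward induction on $j$, from $\ell$ down to $1$, showing $x_{i,j} = x_{k,j}$ for every $i$. At the base $j = \ell$, any prime $p$ dividing $x_{k,\ell}$ automatically lies in $\PPc$ (since every prime factor of any $x_{i,j}$ with $j \ge 1$ divides $\mathfrak{z}$); the first-coordinate inequality above forces $\upsilon_p(x_{i,\ell}) \ge \upsilon_p(x_{k,\ell})$, hence $x_{k,\ell} \mid x_{i,\ell}$ in $\NNb^+$. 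Combined with $x_{i,\ell} \le x_{k,\ell}$ (which follows from $T_i \prec T_k$ or $T_i = T_k$ by inspecting the top coordinate of $\prec$), this yields $x_{i,\ell} = x_{k,\ell}$. The inductive step at level $j < \ell$ is verbatim the same: equality at all levels $> j$ collapses both the lex-inequality and the $\prec$-comparison down to level $j$, producing $x_{k,j} \mid x_{i,j}$ and $x_{i,j} \le x_{k,j}$ respectively, hence $x_{i,j} = x_{k,j}$.

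The main technical point is the \emph{lockstep} interplay between the two orderings at each inductive level: the lex comparison of $p$-adic valuation tuples (forcing divisibility $x_{k,j} \mid x_{i,j}$) and the $\prec$-comparison of the original positive integer tuples (capping $x_{i,j} \le x_{k,j}$). Together they pin down equality at each coordinate and keep the induction going. Once $T_i = T_k$ is established for all $i \in \llb 1, k \rrb$, we contradict $T_1 \prec T_2 \prec \cdots \prec T_k$ with $k \ge 2$, and the lemma follows.
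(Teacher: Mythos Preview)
Your proof is correct. Both you and the paper argue by contradiction, starting from the observation that if $\Delta e_p^{(k)}$ does not tend to infinity for any $p \in \PPc$ then (being an eventually nonnegative polynomial with no constant term) it is identically zero, i.e.\ $e_p^{(k)} = e_p^{(i_p)}$ for every $p \in \PPc$. From that common starting point the two arguments diverge.

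The paper observes that $\prod_{p \in \PPc} p^{e_p^{(i)}(n)} = \prod_{j=1}^\ell x_{i,j}^{n^j}$ is, for large $n$, bounded below by $\prod_p p^{e_p^{(i_p)}(n)}$ (termwise, by minimality of $i_p$) and above by $\prod_p p^{e_p^{(k)}(n)}$ (globally, by the $\prec$-ordering via \eqref{equ:limit}). Since these two bounds coincide under the contradiction hypothesis, unique factorization forces $e_p^{(i)} = e_p^{(i_p)}$ for all $p$ and $i$, and hence $s_{2n} = (x_{1,0} + \cdots + x_{k,0}) \prod_{p} p^{e_p^{(i_p)}(2n)}$, contradicting the standing Case~(i) assumption that $s_{2n}$ is \emph{not} of the form $\prod_j a_j^{(2n)^j}$.

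Your argument instead runs a downward induction on the coordinate $j$, at each step combining the lex inequality on $p$-adic valuation tuples (yielding $x_{k,j} \mid x_{i,j}$) with the $\prec$-ordering on the positive integers $x_{i,j}$ (yielding $x_{i,j} \le x_{k,j}$), to conclude $T_i = T_k$ for all $i$ and contradict $T_1 \prec T_k$. This is perfectly valid and more hands-on; the paper's product-sandwich is shorter and has the minor advantage of contradicting the primary Case~(i) hypothesis directly rather than the derived normalization on the $T_i$.
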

\begin{proof}
Suppose the contrary is true. Then, for each $p \in \PPc$ we have $e_p^{(k)} = e_p^{(i_p)}$, since $\Delta e_p^{(k)}(n)$ is basically a polynomial with integral coefficients in the variable $n$ and $\Delta e_p^{(k)}(n) = e_p^{(k)}(n) - e_p^{(i_p)}(n) \ge 0$ for $n \ge n_\PPc$. Therefore, we get from \eqref{equ:limit} that
$$
\prod_{p \in \PPc} p^{e_p^{(i_p)}(n)} \le \prod_{p \in \PPc} p^{e_p^{(i)}(n)} \le \prod_{p \in \PPc} p^{e_p^{(k)}(n)} = \prod_{p \in \PPc} p^{e_p^{(i_p)}(n)}
$$
for all $n \ge n_\PPc$ and $i \in \llb 1, k \rrb$. But this is impossible, as it implies that $e_p^{(i)} = e_p^{(i_p)}$ for all $p \in \PPc$ and $i \in \llb 1, k \rrb$, and hence, in view of \eqref{equ:rewriting}, $s_{n} = (x_{1,0} + \cdots + x_{k,0}) \cdot \prod_{p \in \PPc} p^{e_p^{(i_p)}(n)}$ for all $n$.
\end{proof}
Now, let $A := 2 \mathfrak z^2$ (this is just a convenient value for $A$: We make no effort to try to optimize it, and the same is true for other constants later on). Since $\Delta e_p^{(k)}$ is eventually non-\-de\-creas\-ing for every $p \in \mathbf P$ (recall that $\Delta e_p^{(k)}$ is a polynomial function and $\Delta e_p^{(k)}(n) \ge 0$ for all large $n$), we obtain from \eqref{equ:subsums}, \eqref{equ:asymptotic_for_s2n}, \eqref{equ:asymptotic_for_sigma2n}, and Lemma \ref{lem:1} that there exists $n_0 \ge \max(2,n_\PPc)$ such that
\begin{equation}
\label{equ:growth_bound_and_monotonicity}
\sigma_{n}^{\fixed[0.2]{\text{ }}2} \le s_{n}^{\fixed[0.2]{\text{ }}2} < A^{n^\ell}
\quad\text{and}\quad
0 \ne |\sigma_{n}| < |\sigma_{n+1}|,
\quad \text{for }n \ge n_0.
\end{equation}
From here on, the proof of Case (i) splits, as we present two different approaches that can be used to finish it, the first of them relying on a theorem of Evertse from \cite{Ev95}, and the second using only elementary methods (as anticipated in the introduction).
\vskip 0.3cm
\noindent{}\framebox[1.1\width]{\textsc{1st Approach:}} Let $\mathfrak y := \mathfrak z \cdot \prod_{i=1}^k |x_{i,0}|$ and $B :=  \max(n_0^\ell,(2^{35} k^2)^{2 k^3 \mathfrak{y} \ell \log A})$. We will need the following:
\begin{lemma}\label{lem:ref}
There is a sequence $(r_\kappa)_{\kappa \ge 0}$ of integers $\ge n_0$ such that $r_\kappa^{\fixed[0.15]{\text{ }}\ell} \le B^{\otimes (\kappa+1)}$ and $\omega(s_{r_{\kappa}}) \ge \kappa$ for every $\kappa \in \mathbf N$.
\end{lemma}
\begin{proof}
Set $r_0 := n_0$, fix $\kappa \in \mathbf N^+$, and suppose we have already found an integer $r_{\kappa-1} \ge n_0$ such that $r_{\kappa-1}^{\fixed[0.15]{\text{ }}\ell} \le B^{\otimes \kappa}$ and $\omega(s_{r_{\kappa-1}}) \ge \kappa-1$: Notice how these conditions are trivially satisfied for $\kappa = 1$, because $r_0^{\fixed[0.15]{\text{ }}\ell} = n_0^{\fixed[0.15]{\text{ }}\ell} \le B = B^{\otimes 1}$ and $\omega(x) \ge 0$ for all $x \in \mathbf Z$.

Accordingly, denote by $\mathcal S_\kappa$ the set of prime divisors of $\mathfrak y \cdot s_{r_{\kappa-1}}$,
and for all $n \ge n_0$ and $i \in \llb 1, k \rrb$ let $X_i(n) := s_n^{-1} \cdot \prod_{j=0}^\ell x_{i,j}^{n^j}$ (note that these quantities are well defined, since we have by \eqref{equ:subsums} that $s_n \ne 0$ for $n \ge n_0$). A few remarks are in order.

Firstly, it is easy to check that, for every $n \ge n_0$, the $k$-tuple ${\bm X}_n := (X_1(n), \ldots, X_k(n)) \in \mathbf Q^k$ is a so\-lu\-tion to the following equation (over the additive group of the rational field):
\begin{equation}
\label{equ:unit-equation-in-Q}
Y_1 + \cdots + Y_k = 1,
\end{equation}
and we derive from \eqref{equ:subsums} that it is actually a \textit{non-degenerate} solution, where a solution $(Y_1, \ldots, Y_k)$ of \eqref{equ:unit-equation-in-Q} is called non-degenerate if $\sum_{i \in I} Y_i \ne 0$ for every non-empty $I \subseteq \llb 1, k \rrb$.

Secondly, it is plain from our definitions that
${\bm X}_{m} = {\bm X}_{n}$, for some $m, n \ge n_0$, only if
\begin{equation}
\label{equ:p-adic-equality}
\upsilon_p(X_i(m)) = \upsilon_p(X_i(n)),
\quad\text{for all }p \in \mathbf P
\text{ and }
i \in \llb 1, k \rrb,
\end{equation}
and we want to show that this, in turn, is possible only if
$|\sigma_m| = |\sigma_n|$.

Indeed, let $p \in \mathbf P$ and $n \ge n_0$. By construction,
the $p$-adic valuation of $\prod_{j=1}^\ell x_{i_p,j}^{n^j}$ is equal to $e_p^{(i_p)}(n)$, with $e_p^{(i_p)}(n)$ being zero if $p \notin \mathcal P$. Thus, we obtain from \eqref{equ:reduced_sequence} that
$$
\upsilon_p(X_{i_p}(n)) = \upsilon_p(x_{i_p,0}) + e_p^{(i_p)}(n) - \upsilon_p(s_{n}) = v_p(x_{i_p,0}) - \upsilon_p(\sigma_n).
$$
It follows that, for $m, n \ge n_0$, \eqref{equ:p-adic-equality} holds true only if $\upsilon_p(\sigma_m) = \upsilon_p(\sigma_n)$ for all $p \in \mathbf P$, which is equivalent to $|\sigma_m| = |\sigma_n|$. Accordingly, we conclude from \eqref{equ:growth_bound_and_monotonicity} and the above that, for $m, n \ge n_0$ and $m \ne n$, ${\bm X}_m$ and ${\bm X}_n$ are \textit{distinct} non-degenerate solutions of \eqref{equ:unit-equation-in-Q}.

Thirdly, let $N_\kappa$ be the number of non-degenerate solutions $(Y_1, \ldots, Y_k)$ to \eqref{equ:unit-equation-in-Q} for which each $Y_i$ is an $\mathcal S_\kappa$-unit (i.e., lies in the subgroup of the multiplicative group of $\mathbf Q$ generated by $\mathcal S_\kappa$). We obtain from \cite[Theorem 3]{Ev95} that $N_\kappa \le (2^{35} k^2)^{k^3 g_\kappa}$, where
\begin{equation*}
\begin{split}
g_\kappa := |\mathcal S_\kappa| & \le \omega(\mathfrak y) + \omega(s_{r_{\kappa-1}}) \le \mathfrak y + \log |s_{r_{\kappa-1}}|
\fixed[-0.65]{\text{ }}\stackrel{\eqref{equ:growth_bound_and_monotonicity}}{\le}\fixed[-0.65]{\text{ }}
\mathfrak y + r_{\kappa-1}^{\fixed[0.15]{\text{ }}\ell} \log A \le \mathfrak y \cdot r_{\kappa-1}^{\fixed[0.15]{\text{ }}\ell} \log A.
\end{split}
\end{equation*}
Using that $r_{\kappa-1}^{\fixed[0.15]{\text{ }}\ell} \le B^{\otimes \kappa}$ (by the inductive hypothesis), we thus conclude that
\begin{equation}
\label{equ:bound-on-number-of-nondegenerate-solutions-to-S-unit-equ}
N_{\kappa }\le (2^{35} k^{2})^{B^{\otimes \kappa } k^{3} \mathfrak{y}
 \log A} \le B
^{B^{\otimes \kappa }/(2\ell )}.
\end{equation}
With this in hand, define $\wp := \prod_{p \fixed[0.35]{\text{ }}\mid \fixed[0.3]{\text{ }} s_{r_{\kappa-1}}} (p-1)$ and let $(t_h)_{h \ge 0}$ be the subsequence of $(s_n)_{n \ge 1}$ of general term $t_h := s_{h\wp+r_{\kappa-1}}$. We know from the above that there exists $h_\kappa \in \llb 0, N_\kappa \rrb$ such that $t_{h_\kappa}$ is not an $\mathcal S_\kappa$-unit (note that $h\wp + r_{\kappa-1} \ge r_{\kappa-1} \ge n_0$ for all $h$), with the result that at least one prime divisor of $t_{h_\kappa}$ does not divide $s_{r_{\kappa-1}}$. On the other hand, a straightforward application of Fermat's little theorem shows that $p \mid t_{h_\kappa}$ for every $p \in \mathbf P$ such that $p \mid s_{r_{\kappa-1}}$. So, putting it all together, we find $\omega(s_{r_\kappa}) \ge 1 + \omega(s_{r_{\kappa-1}}) \ge \kappa$, where
\begin{equation*}
\label{equ:inductive-step-ineq}
\begin{split}
r_\kappa
    &
    := h_\kappa \wp + r_{\kappa-1} \le N_\kappa \fixed[0.25]{\text{ }} s_{r_{\kappa-1}} +r_{\kappa-1}
    \le N_\kappa \fixed[0.25]{\text{ }} s_{r_{\kappa-1}} \fixed[0.25]{\text{ }} r_{\kappa-1}
    \fixed[-0.75]{\text{ }}\stackrel{\eqref{equ:growth_bound_and_monotonicity}}{\le}\fixed[-0.75]{\text{ }}
    A^{r_{\kappa-1}^{\fixed[0.15]{\text{ }}\ell}} N_\kappa \fixed[0.25]{\text{ }} r_{\kappa-1} \le A^{2r_{\kappa -1}^{\ell }}
N_{\kappa } \\
    &
    \stackrel{\eqref{equ:bound-on-number-of-nondegenerate-solutions-to-S-unit-equ}}{\le}\fixed[-0.75]{\text{ }}
    A^{2r_{\kappa -1}^{\ell }} B^{B^{\otimes \kappa }/(2\ell )}
\le A^{2B^{
\otimes \kappa }}B^{B^{\otimes \kappa }/(2\ell )}  \le B^{B^{\otimes \kappa }/(2\ell )} B^{B^{\otimes
\kappa }/(2\ell )}
\end{split}
\end{equation*}
(recall from the above that $r_{\kappa-1}^{\fixed[0.25]{\text{ }}\ell} \le B^{\otimes \kappa}$). This completes the induction step (and hence the proof of the lemma), since it implies
$r_\kappa^{\fixed[0.25]{\text{ }}\ell} \le B^{\frac{3}{4} B^{\otimes \kappa}} B^{\otimes \kappa} \le B^{\otimes (\kappa+1)}$.
\end{proof}
So to conclude, let $(r_\kappa)_{\kappa \ge 0}$ be the sequence of Lemma \ref{lem:ref} and take $\theta :=B^{\otimes 3}$. Then $\theta > 1$ and
$\omega (s_{r_{\kappa }}) \ge \kappa > \mathrm{slog}_{\theta }(r_{
\kappa })$ for all $\kappa \in \mathbf{N}^{+}$, because $r_{\kappa }
\le r_{\kappa }^{\ell } \le B^{\otimes (\kappa
+1)}$.
\vskip 0.3cm
\noindent{}\framebox[1.1\width]{\textsc{2nd Approach:}} Denote by $\mathcal Q_n$ the set of all prime divisors of $\sigma_n$ and let $\mathcal Q_n^\star := \mathcal Q_n \setminus \mathcal P$. It is clear that $\mathcal Q_n$ is finite for $n \ge n_0$ (recall that $\sigma_n \ne 0$ for $n \ge n_\PPc$). Thus, let
$$
\lambda := \max_{p \in \mathcal P} \upsilon_p(\sigma_{n_0}) + \max_{p \in \mathcal P} \max_{1 \le i \le k} \Delta e_p^{(i)}(n_0),
$$
and then
$$
\alpha := k \cdot \max_{1 \le i \le k} |x_{i,0}| \cdot \prod_{p \in \mathcal{P}} p^\lambda,
\quad
\beta := \prod_{p \in \mathcal{P}} p^{\alpha-1}(p-1),
\quad\text{and}\quad
B := A^2 \beta.
$$
Lastly, suppose that, for a fixed $\kappa \in \NNb$, we have already found $r_0, \ldots, r_\kappa \in \mathbf N^+$ with $n_0 \le r_0 \le \cdots \le r_\kappa$, and define
$\beta_\kappa := \beta \cdot \prod_{p \in \mathcal{Q}_{r_\kappa}^\star} p^{\upsilon_p(\sigma_{r_\kappa})}(p-1)$.

By taking $r_0 := n_0$ and $r_{\kappa+1} := \beta_\kappa + r_\kappa$, we obtain an increasing sequence $(r_\kappa)_{\kappa \ge 0}$ of integers $\ge n_0$ with the property that, however we choose a prime $p \in \PPc$ and an index $i \in \llb 1, k \rrb$,
\begin{equation}
\label{equ:congruence_mod_P_in_the_exp}
\Delta e_p^{(i)}(r_{\kappa+1}) \equiv \Delta e_p^{(i)}(r_\kappa) \bmod q^{\alpha-1}(q-1),\quad\text{for all }q \in \mathcal P
\end{equation}
and
\begin{equation}
\label{equ:congruence_mod_Qi_in_the_exp}
\Delta e_p^{(i)}(r_{\kappa+1}) \equiv \Delta e_p^{(i)}(r_\kappa) \bmod q^{\upsilon_q(\sigma_{r_\kappa})}(q-1),\quad\text{for all }q \in \QQc_{r_\kappa}^\star,
\end{equation}
where we use that $\Delta e_p^{(i)}$ is essentially a polynomial with integral coefficients, and $r_{\kappa+1} \equiv r_\kappa \bmod m$ whenever $m \mid \beta_\kappa$. In particular,  \eqref{equ:congruence_mod_P_in_the_exp} and a routine induction imply
\begin{equation}
\label{equ:reduction_ad_zerum}
\Delta e_p^{(i)}(r_\kappa) \equiv \Delta e_p^{(i)}(n_0) \bmod q^{\alpha-1}(q-1),\quad\text{for all }p, q \in \mathcal P,\ i \in \llb 1, k \rrb, \text{ and }\kappa \in \NNb.
\end{equation}
Also, since $r_\kappa \ge n_0$, there exists $B > A$ such that, for all $\kappa$,
\begin{equation}
\label{equ:upper_bound_on_beta_k^2}
r_{\kappa+1} \le r_\kappa + \beta \cdot \prod_{p \in \mathcal{Q}_{r_\kappa}} p^{\upsilon_p(\sigma_{r_\kappa})}(p-1) \le r_\kappa + \beta \fixed[0.15]{\text{ }}\sigma_{r_\kappa}^2 <
r_\kappa + \beta A^{r_\kappa^\ell} \le \beta r_\kappa A^{r_\kappa^\ell}
\fixed[-0.35]{\text{ }}\stackrel{\eqref{equ:growth_bound_and_monotonicity}}{<}\fixed[-0.35]{\text{ }}
B^{\fixed[0.1]{\text{ }} r_\kappa^\ell}.
\end{equation}
Based on these premises, we prove a series of three lemmas. To ease notation, we denote by $I_p$, for each $p \in \PPc$, the set of all $i \in \llb 1, k \rrb$ such that $e_p^{(i)} \ne e_p^{(i_p)}$, and we let $I_p^\star := \llb 1, k \rrb \setminus I_p$.
\begin{lemma}
\label{lem:2}
$\mathcal Q_{r_\kappa} \subseteq \mathcal Q_{r_{\kappa+1}}$ for every $\kappa$.
\end{lemma}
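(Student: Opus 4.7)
The plan is to fix a prime $q\in\QQc_{r_\kappa}$ and prove the stronger congruence $\sigma_{r_{\kappa+1}}\equiv\sigma_{r_\kappa}\pmod{q}$; combined with the hypothesis $q\mid\sigma_{r_\kappa}$, this immediately yields $q\mid\sigma_{r_{\kappa+1}}$, i.e., $q\in\QQc_{r_{\kappa+1}}$. The natural split is according to whether $q\in\PPc$ or $q\in\QQc_{r_\kappa}^\star$, because this decides which of the two exponent-congruences \eqref{equ:congruence_mod_P_in_the_exp}--\eqref{equ:congruence_mod_Qi_in_the_exp} one should feed into a Fermat's-little-theorem argument applied summand-by-summand in \eqref{equ:reduced_sequence}.

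In the case $q\in\QQc_{r_\kappa}^\star$, every $p\in\PPc$ is coprime to $q$, so $p^a\equiv p^b\pmod{q}$ whenever $a\equiv b\pmod{q-1}$. Since $(q-1)\mid q^{\upsilon_q(\sigma_{r_\kappa})}(q-1)$, \eqref{equ:congruence_mod_Qi_in_the_exp} already gives $p^{\Delta e_p^{(i)}(r_{\kappa+1})}\equiv p^{\Delta e_p^{(i)}(r_\kappa)}\pmod{q}$ for every $p\in\PPc$ and $i\in\llb 1,k\rrb$; multiplying these congruences across $p\in\PPc$ and summing over $i$ in \eqref{equ:reduced_sequence} yields $\sigma_{r_{\kappa+1}}\equiv\sigma_{r_\kappa}\pmod{q}$.

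In the case $q\in\PPc$, the same Fermat argument (now driven by \eqref{equ:congruence_mod_P_in_the_exp}, which in particular implies congruence mod $q-1$) handles every factor $p\in\PPc\setminus\{q\}$ in the $i$-th summand; only the factor $q^{\Delta e_q^{(i)}(n)}$ itself needs dedicated treatment, and here I would split the summation in \eqref{equ:reduced_sequence} according to whether $i\in I_q^\star$ or $i\in I_q$. For $i\in I_q^\star$ one has $\Delta e_q^{(i)}\equiv 0$, so this factor is $1$ and the whole summand is stable modulo $q$ from $r_\kappa$ to $r_{\kappa+1}$; for $i\in I_q$, the choice of $i_q$ together with $r_\kappa,r_{\kappa+1}\ge n_\PPc\ge n_q$ forces $\Delta e_q^{(i)}(r_\kappa),\Delta e_q^{(i)}(r_{\kappa+1})\ge 1$, so both corresponding summands vanish modulo $q$. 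Either way, the $i$-th summand contributes the same residue mod $q$ at $r_\kappa$ and $r_{\kappa+1}$, and again $\sigma_{r_{\kappa+1}}\equiv\sigma_{r_\kappa}\pmod{q}$.

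I do not anticipate a real obstacle here: the exponent-congruences \eqref{equ:congruence_mod_P_in_the_exp} and \eqref{equ:congruence_mod_Qi_in_the_exp} were engineered precisely to make Fermat's little theorem go through prime-by-prime in a multiplicative expression like \eqref{equ:reduced_sequence}, and the only delicate point is isolating the base $p=q$ in the product when $q\in\PPc$, which the case split $I_q^\star$ vs.\ $I_q$ resolves cleanly thanks to the defining property of $i_q$.
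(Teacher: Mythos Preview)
Your proposal is correct and follows essentially the same argument as the paper's proof: both establish the stronger congruence $\sigma_{r_{\kappa+1}}\equiv\sigma_{r_\kappa}\pmod q$ by showing, factor by factor, that $p^{\Delta e_p^{(i)}(r_{\kappa+1})}\equiv p^{\Delta e_p^{(i)}(r_\kappa)}\pmod q$ via Fermat's little theorem when $p\ne q$, and via the vanishing/triviality dichotomy governed by $I_q$ versus $I_q^\star$ when $p=q$. The only organizational difference is that you first split on whether $q\in\PPc$ or $q\in\QQc_{r_\kappa}^\star$ and then treat the base $p=q$ separately, whereas the paper runs the two cases in parallel by directly analyzing each factor $p^{\Delta e_p^{(i)}(n)}$ modulo $q$ according to whether $i\in I_p^\star$ (factor equals~$1$), $p=q$ with $i\in I_p$ (factor $\equiv 0$), or $p\ne q$ with $i\in I_p$ (Fermat applies); the substance is identical.
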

\begin{proof}
Pick $\kappa \in \NNb$ and $q \in \mathcal Q_{r_\kappa}$.
If $i \in I_p$, then $\Delta e_p^{(i)}(n) = 0$ for all $n$, and hence $p^{\Delta e_p^{(i)}(n)} = 1$. If, on the other hand, $i \in I_p^\star$, then $\Delta e_p^{(i)}(n) > 0$ for $n \ge n_\PPc$, and we conclude from Fermat's little theorem that $p^{\Delta e_p^{(i)}(n)} \equiv 0 \bmod q$ if $q = p$, and $p^{\Delta e_p^{(i)}(n)} \equiv p^m \bmod q$ if $p \ne q$ and $\Delta e_p^{(i)}(n) \equiv m \bmod (q-1)$. So we get from \eqref{equ:congruence_mod_P_in_the_exp}, \eqref{equ:congruence_mod_Qi_in_the_exp}, and $r_{\kappa+1} > r_{\kappa} \ge n_0 > n_\PPc$ that
$$
p^{\Delta e_p^{(i)}(r_{\kappa+1})} \equiv p^{\Delta e_p^{(i)}(r_\kappa)} \bmod q,\quad\text{for all }p \in \PPc\text{ and }i \in \llb 1, k \rrb,
$$
which in turn implies
$$
\sigma_{r_{\kappa+1}} \equiv \sum_{i=1}^k \!\left(x_{i,0} \prod_{p \in \mathcal{P}} p^{\Delta e_p^{(i)}(r_{\kappa+1})}\right)\! \equiv \sum_{i=1}^k \!\left(x_{i,0} \prod_{p \in \mathcal{P}} p^{\Delta e_p^{(i)}(r_\kappa)}\right)\! \equiv \sigma_{r_\kappa} \equiv 0 \bmod q.
$$
This finishes the proof, since $\kappa\in {\bf N}$ and $q\in {\mathcal Q}_{r_{\kappa}}$ were arbitrary.
\end{proof}
\begin{lemma}
\label{lem:3}
Let $q \in \PPc$ and $\kappa \in \NNb$. Then $\upsilon_q(\sigma_{r_\kappa}) \le \alpha - 1$.
\end{lemma}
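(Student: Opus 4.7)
The plan is to bound $\upsilon_q(\sigma_{r_\kappa})$ from above by $\alpha - 1$ by computing $\sigma_{r_\kappa}$ modulo $q^\alpha$ and showing that it reduces to a specific nonzero sub-sum $T$ of $\sigma_{2n_0}$ whose absolute value is at most $\alpha$, hence strictly less than $q^\alpha$.

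Starting from $\sigma_{r_\kappa} = \sum_{i=1}^k x_{i,0} \prod_{p \in \PPc} p^{\Delta e_p^{(i)}(r_\kappa)}$, I would reduce each factor $p^{\Delta e_p^{(i)}(r_\kappa)}$ modulo $q^\alpha$ as follows. For each $p \in \PPc \setminus \{q\}$, we have $\gcd(p,q^\alpha) = 1$ and $\phi(q^\alpha) = q^{\alpha-1}(q-1)$, so \eqref{equ:reduction_ad_zerum} combined with Euler's theorem yields $p^{\Delta e_p^{(i)}(r_\kappa)} \equiv p^{\Delta e_p^{(i)}(2n_0)} \pmod{q^\alpha}$. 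For $p = q$, I would distinguish two cases: if $\Delta e_q^{(i)}(r_\kappa) \ge \alpha$, the entire $i$-th summand vanishes mod $q^\alpha$; otherwise, using that $\Delta e_q^{(i)}(2n_0) \le \lambda < \alpha \le q^{\alpha-1}(q-1)$ (the rightmost inequality being a simple exercise for $q \ge 2$ and $\alpha \ge 1$) together with the congruence \eqref{equ:reduction_ad_zerum} applied with $p=q$, I would deduce $\Delta e_q^{(i)}(r_\kappa) = \Delta e_q^{(i)}(2n_0)$, so that $q^{\Delta e_q^{(i)}(r_\kappa)} = q^{\Delta e_q^{(i)}(2n_0)}$.

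Putting the reductions together, I would arrive at
$$\sigma_{r_\kappa} \equiv T := \sum_{i \in I^T} x_{i,0} \prod_{p \in \PPc} p^{\Delta e_p^{(i)}(2n_0)} \pmod{q^\alpha},$$
where $I^T \subseteq \llb 1,k\rrb$ is the set of indices whose $q$-contribution did not vanish in the reduction; note $i_q \in I^T$, so $I^T$ is nonempty. Since $2n_0 \ge n_\PPc \ge N$, the relation \eqref{equ:subsums} (after clearing the positive factor $\pi_{2n_0}$) forces $T \ne 0$. On the other hand, each of the at most $k$ summands defining $T$ is bounded in absolute value by $\max_i |x_{i,0}| \cdot \prod_{p \in \PPc} p^\lambda = \alpha/k$, because $\Delta e_p^{(i)}(2n_0) \le \lambda$ for every $(i,p)$ by the very definition of $\lambda$. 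Hence $|T| \le \alpha < q^\alpha$, and a nonzero integer of absolute value strictly less than $q^\alpha$ cannot be divisible by $q^\alpha$; thus $\sigma_{r_\kappa} \not\equiv 0 \pmod{q^\alpha}$, which is exactly $\upsilon_q(\sigma_{r_\kappa}) \le \alpha - 1$.

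The main subtlety is the treatment of the factor $p = q$, where Euler's theorem does not apply directly: the crucial step is converting the congruence on exponents into an equality of exponents whenever the new exponent stays below the cutoff $\alpha$, and this hinges on the elementary bound $q^{\alpha-1}(q-1) \ge \alpha$ for $q \ge 2$ and $\alpha \ge 1$. A secondary point is to verify that $T$ really is a partial sum of $\sigma_{2n_0}$ indexed by a nonempty subset of $\llb 1,k\rrb$, which is where the recursive construction pays off: because $i_q \in I^T$, relation \eqref{equ:subsums} applies and guarantees $T \ne 0$.
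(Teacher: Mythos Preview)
Your argument is correct and follows essentially the same strategy as the paper: reduce $\sigma_{r_\kappa}$ modulo $q^\alpha$ to a nonempty sub-sum of $\sigma_{2n_0}$, invoke \eqref{equ:subsums} for nonvanishing, and bound the sub-sum by $\alpha < q^\alpha$. The only cosmetic difference is that the paper splits the index set as $I_q \cup I_q^\star$ and handles $\kappa=0$ separately (using that $\Delta e_q^{(i)}(r_\kappa)\ge r_\kappa>\alpha$ for $i\in I_q$, $\kappa\ge 1$), whereas you treat all $\kappa$ uniformly and define $I^T$ dynamically via the dichotomy $\Delta e_q^{(i)}(r_\kappa)\ge\alpha$ versus $<\alpha$, recovering the exponent at $2n_0$ in the latter case from the congruence \eqref{equ:reduction_ad_zerum} and the bound $\alpha\le q^{\alpha-1}(q-1)$.
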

\begin{proof}
The claim is straightforward if $\kappa = 0$, since $r_0 = n_0$ and $\upsilon_q(\sigma_{n_0}) \le \lambda < \alpha$. So assume for the rest of the proof that $\kappa \ge 1$.
Then, we have from \eqref{equ:reduced_sequence} that
\begin{equation}
\label{equ:decomposition_of_a_sum}
\sigma_n =
\sum_{i \in I_q}\!\left(x_{i,0} \prod_{p \in \mathcal{P}} p^{\Delta e_p^{(i)}(n)}\right)\! + \sum_{i \in I_q^\star}\!\left(x_{i,0} \prod_{q \ne p \in \mathcal{P}} p^{\Delta e_p^{(i)}(n)}\right)\fixed[-0.8]{\text{ }},\quad\text{for all }n.
\end{equation}
If $i \in I_q$, $n > n_0$ and $n \equiv n_0 \bmod \beta$, then $q^\alpha$ divides $\prod_{p \in \mathcal P} p^{\Delta e_p^{(i)}(n)}$, because $n \mid \Delta e_p^{(i)}(n)$ and $\Delta e_p^{(i)}(n) \ne 0$, hence $\alpha < \beta < \beta + n_0 \le n \le \Delta e_p^{(i)}(n)$.

On the other hand, it is seen by induction that $r_\kappa \equiv n_0 \bmod \beta$ (recall that $r_{\kappa} \equiv r_{\kappa-1} \bmod \beta$). Thus, we get from the above, equations \eqref{equ:decomposition_of_a_sum} and \eqref{equ:reduction_ad_zerum}, \cite[Theorem 2.5(a)]{Apo}, and Euler's totient theorem that
\begin{equation}
\label{equ:reduction_ii}
\sigma_{r_{\kappa}} \equiv \sum_{i \in I_q^\star}\!\left(x_{i,0} \prod_{q \ne p \in \mathcal{P}} p^{\Delta e_p^{(i)}(r_{\kappa})}\right)\!
\equiv \sum_{i \in I_q^\star}\!\left(x_{i,0} \prod_{q \ne p \in \mathcal{P} } p^{\Delta e_p^{(i)}(n_0)}\right)\!
\bmod q^{\alpha}.
\end{equation}
But $\emptyset \ne I_q^\star \subseteq \llb 1, k \rrb$, so it follows from \eqref{equ:subsums} that
\begin{equation*}
\begin{split}
0 & < \left|\frac{1}{\pi_{n_0}} \sum_{i \in I_q^\star} \prod_{j=0}^\ell x_{i,j}^{(n_0)^j}\right|
     = \left|\sum_{i \in I_q^\star} \!\left(x_{i,0} \prod_{q \ne p \in \mathcal{P}} p^{\Delta e_p^{(i)}(n_0)}\right) \right| \\
    & \le \max_{1 \le i \le k} |x_{i,0}| \cdot \sum_{i \in I_q^\star} \prod_{q \ne p \in \mathcal{P}} p^{\Delta e_p^{(i)}(n_0)} \le k \cdot \max_{1 \le i \le k} |x_{i,0}| \cdot \prod_{p \in \mathcal{P}} p^{\lambda} = \alpha < q^{\alpha},
\end{split}
\end{equation*}
which, together with \eqref{equ:reduction_ii}, yields $\upsilon_q(\sigma_{r_{\kappa}}) < \alpha$.
\end{proof}
\begin{lemma}
\label{lem:4}
Let $\kappa \in \NNb^+$ and $q \in \QQc_{r_\kappa}$. Then $\upsilon_q(\sigma_{r_\kappa}) = \upsilon_q(\sigma_{r_{\kappa+1}})$.
\end{lemma}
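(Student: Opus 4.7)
The plan is to split the argument into two cases depending on whether $q \in \PPc$ or $q \in \QQc_{r_\kappa}^\star$, and in both cases to establish a congruence of the form $\sigma_{r_\kappa} \equiv \sigma_{r_{\kappa+1}} \bmod q^m$ for some $m > \upsilon_q(\sigma_{r_\kappa})$. Once such a congruence is in hand, the conclusion is immediate: writing $\sigma_{r_\kappa} = q^{\upsilon_q(\sigma_{r_\kappa})} u$ with $\gcd(u,q)=1$ and $\sigma_{r_{\kappa+1}} = \sigma_{r_\kappa} + q^m t$, factoring out $q^{\upsilon_q(\sigma_{r_\kappa})}$ leaves an inner factor coprime to $q$, forcing $\upsilon_q(\sigma_{r_{\kappa+1}}) = \upsilon_q(\sigma_{r_\kappa})$.

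For $q \in \PPc$, I plan to reuse congruence \eqref{equ:reduction_ii} from the proof of Lemma \ref{lem:3}. Since $\kappa \in \NNb^+$, the congruence applies to both $r_\kappa$ and $r_{\kappa+1}$, and its right-hand side is independent of $\kappa$ (depending only on $2n_0$). This yields $\sigma_{r_\kappa} \equiv \sigma_{r_{\kappa+1}} \bmod q^\alpha$, while Lemma \ref{lem:3} ensures $\upsilon_q(\sigma_{r_\kappa}), \upsilon_q(\sigma_{r_{\kappa+1}}) \le \alpha - 1 < \alpha$, and the opening observation (with $m = \alpha$) finishes the case.

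For $q \in \QQc_{r_\kappa}^\star$, the key observation is that the factor $p^{\upsilon_p(\sigma_{r_\kappa})}(p-1)$ appearing in $\beta_\kappa$ equals $\phi(q^{s+1})$ when $p = q$, where $s := \upsilon_q(\sigma_{r_\kappa})$---one step beyond the naive $\phi(q^{s})$. Consequently $\phi(q^{s+1}) \mid \beta_\kappa \mid r_{\kappa+1} - r_\kappa$, and the integer-polynomial nature of each $\Delta e_p^{(i)}$ gives $\Delta e_p^{(i)}(r_{\kappa+1}) \equiv \Delta e_p^{(i)}(r_\kappa) \bmod \phi(q^{s+1})$ for all $p \in \PPc$ and $i \in \llb 1, k \rrb$. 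Since $q \notin \PPc$ implies $\gcd(p,q) = 1$ for every $p \in \PPc$, and since all relevant exponents are nonnegative (as $r_\kappa, r_{\kappa+1} \ge n_\PPc$), Euler's totient theorem upgrades the exponent congruence to $p^{\Delta e_p^{(i)}(r_{\kappa+1})} \equiv p^{\Delta e_p^{(i)}(r_\kappa)} \bmod q^{s+1}$. Substituting into \eqref{equ:reduced_sequence} and summing yields $\sigma_{r_{\kappa+1}} \equiv \sigma_{r_\kappa} \bmod q^{s+1}$, and the opening observation again concludes.

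The main obstacle is precisely this one-power-of-$q$ gap in Case 2: without the deliberate overshoot baked into $\beta_\kappa$, one would obtain only a congruence modulo $q^s$, giving the useless inequality $\upsilon_q(\sigma_{r_{\kappa+1}}) \ge s$ in place of equality. Recognizing that the definition of $\beta_\kappa$ encodes $\phi(q^{s+1})$ rather than $\phi(q^s)$---mirroring the analogous overshoot $\beta = \prod_{p \in \PPc} \phi(p^\alpha)$ that was the crucial ingredient in Lemma \ref{lem:3}---is the conceptual heart of the argument.
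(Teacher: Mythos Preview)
Your proposal is correct and follows essentially the same approach as the paper: splitting into $q \notin \PPc$ (handled via \eqref{equ:congruence_mod_Qi_in_the_exp} and Euler's theorem to get $\sigma_{r_{\kappa+1}} \equiv \sigma_{r_\kappa} \bmod q^{\upsilon_q(\sigma_{r_\kappa})+1}$) and $q \in \PPc$ (handled via \eqref{equ:reduction_ii} and Lemma~\ref{lem:3}). The only cosmetic difference is that for $q \in \PPc$ the paper routes through $r_1$, deducing $\upsilon_q(\sigma_{r_\kappa}) = \upsilon_q(\sigma_{r_1})$ for all $\kappa \ge 1$, whereas you compare $r_\kappa$ and $r_{\kappa+1}$ directly modulo $q^\alpha$; both versions are immediate from the same ingredients.
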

\begin{proof}
If $q \notin \PPc$, then we infer from \eqref{equ:reduced_sequence} and \eqref{equ:congruence_mod_Qi_in_the_exp}, \cite[Theorem 2.5(a)]{Apo}, and Euler's totient theorem that
$\sigma_{r_{\kappa+1}} \equiv \sigma_{r_\kappa} \bmod q^{\upsilon_q(\sigma_{r_\kappa}) + 1}$, and we are done.

If, on the other hand, $q \in \PPc$, then we get from Lemma \ref{lem:3} that $\upsilon_q(\sigma_{r_1}) \le \alpha-1$, which, along with \eqref{equ:reduction_ii}, gives $
\sigma_{r_\kappa} \equiv \sigma_{r_1} \bmod q^{\upsilon_q(\sigma_{r_1})+1}$, and consequently $\upsilon_q(\sigma_{r_\kappa}) = \upsilon_q(\sigma_{r_1})$.
\end{proof}
At this point, since $(|\sigma_n|)_{n \ge n_0}$ is an increasing sequence by \eqref{equ:growth_bound_and_monotonicity} and $r_\kappa \ge n_0$ for all $\kappa \in \mathbf N^+$, we see from Lemmas \ref{lem:2}-\ref{lem:4} that $\emptyset \ne \QQc_{r_\kappa} \subsetneq \QQc_{r_{\kappa+1}}$, and hence $\omega(\sigma_{r_\kappa}) < \omega(\sigma_{r_{\kappa+1}})$. By induction, this implies $\omega(\sigma_{r_\kappa}) \ge \kappa$ for every $\kappa$.

On the other hand, if we let $\theta := \max(B^\ell \ell, r_1^{\fixed[0.1]{\text{ }}\ell})$, then we get from \eqref{equ:upper_bound_on_beta_k^2} and another induction that $r_\kappa^{\fixed[0.1]{\text{ }}\ell} < \theta^{\otimes\kappa}$ for all $\kappa \in \NNb^+$, which, together with the above considerations, leads to $\omega(\sigma_{r_\kappa}) \ge \kappa > \slog_\theta(r_\kappa)$ and the desired conclusion.
\vskip 0.3cm
\noindent{}\textbf{Case (ii):} \textit{There do not exist $b_0, \ldots, b_\ell \in \QQb$ such that $s_{2n-1} = \prod_{j=0}^\ell b_j^{(2n-1)^j}$ for all $n$.} Then, we are reduced to Case (i) by taking
$$
y_{i,j} := \prod_{h=j}^\ell x_{i,h}^{(-1)^{h-j} \binom{h}{j}}, \quad \text{for }1 \le i \le k \text{ and }0 \le j \le \ell,
$$
and by noting that for every $n \in \NNb^+$ we have $s_{2n-1} = t_{2n}$, where $(t_n)_{n \ge 1}$ is the integer sequence of general term $\sum_{i=1}^k \prod_{j=0}^\ell y_{i,j}^{n^j}$
(we omit further details).
\section{Proof of Corollary \texorpdfstring{\ref{corollary}}{3}}
\label{sec:proof:cor(3)}
Suppose for a contradiction that there are $c_1, \ldots, c_k \in \QQb^+$ and $x_1, \ldots, x_k \in \mathbf Q \setminus \{0\}$ such that $|x_i| \ne |x_j|$ for some $i, j \in \llb 1, k \rrb$ and $(\omega(u_n))_{n \ge 1}$ is bounded, where $u_n := \sum_{i=1}^k c_i x_i^n$ for all $n$, and let $k$ the \textit{minimal} positive integer for which this is pretended to be true.

Then $k \ge 2$, and we can assume that $|x_1| \le \cdots \le |x_k| \ne |x_1|$. Furthermore, we get from Theorem \ref{th:main} that there must exist $c, x \in \mathbf Q^+$ such that $u_{2n} = c \fixed[0.2]{\text{ }} x^{2n}$. So now, we have two cases, each of which will lead to a contradiction (the rest is trivial and we may omit details):
\vskip 0.3cm
\noindent{}\textbf{Case (i):} $x \le |x_k|$. We have $c\fixed[0.2]{\text{ }} y^{2n} = \sum_{i=1}^k c_i y_i^{2n}$ for all $n$, where $y_i := |x_i| \cdot |x_k|^{-1}$ for $1 \le i \le k$ and $y := x \cdot |x_k|^{-1}$. Let $h$ be the maximal index in $\llb 2, k \rrb$ such that $y_{h-1} < y_k$, which exists because $y_1 < y_k$. Since $0 < y \le 1$ and $0 < y_i < 1$ for $1 \le i < h$, we find that
$$
c \cdot \lim_{n \to \infty} y^{2n} = c_h + \cdots + c_k,
$$
which can happen only if $y = 1$, as $c_h, \ldots, c_k > 0$. But then $c = c_1 + \cdots + c_k$, and consequently $\sum_{i=1}^{h-1} c_i y_i^{2n} = 0$ for all $n$, which is impossible, because $h \ge 2$ and $c_1, \ldots, c_{h-1} > 0$.
\vskip 0.3cm
\noindent{}\textbf{Case (ii):} $x > |x_k|$. Then $c = \sum_{i=1}^k c_i z_i^{2n}$ for all $n$, where $z_i := |x_i| \cdot x^{-1}$ for $1 \le i \le k$. But this is still impossible, since $z_1, \ldots, z_k \in {]0,1[}\fixed[0.15]{\text{ }}$, and hence $\sum_{i=1}^k c_i z_i^{2n} \to 0$ as $n \to \infty$.
\section{Closing remarks}
\label{sec:closings}
Let $\tau$ be an increasing function from $\NNb^+$ into itself. What can be said about the behavior of $\omega(s_{\tau(n)})$ as $n \to \infty$?
And what about the asymptotic growth of the average of the function $\RRb^+ \to \NNb: x \mapsto \#\{n \le x: \omega(s_{\tau(n)}) \ge h\}$ for a fixed $h \in \NNb^+$?

In this paper, we have answered the first question in the case where $\tau$ is the identity or, more in general, a polynomial function (by the considerations made in the introduction). It could be interesting as a next step to look at the case where $\tau$ is a geometric progression, which however may be hard, when taking into account that it is a longstanding open problem to decide whether there are infinitely many composite Fermat numbers (that is, numbers of the form $2^{2^n} + 1$).

On the other hand, the basic question addressed in the present manuscript has
the following algebraic generalization:
Given a unique factorization domain $D$, let $\alpha_{i,j}$ be, for $1 \le i \le k$ and $0 \le j \le \ell$, some fixed elements in $D$, and for $x \in D$ let $\omega_D(x)$ denote the number of non-associate primes dividing $x$. What can be said about the sequence $(A_n)_{n \ge 1}$ of general term
$
\sum_{i=1}^k \prod_{j=0}^\ell \alpha_{i,j}^{\fixed[0.1]{\text{ }} n^j}
$
if the sequence $(\omega_D(A_n))_{n \ge 1}$ is bounded? Does anything along the lines of Theorem \ref{th:main} hold true?
\section*{Acknowledgements}
\label{subsec:acks}
The authors are grateful to Carlo Sanna (Universit\`a di Torino, IT) for useful comments. Moreover, they are greatly indebted to an anonymous referee for outlining a proof of Lemma \ref{lem:ref} in the case when the numbers $x_{i,j}$ are positive, and most notably for explaining how to use \cite[Theorem 3]{Ev95} to bound, in the notations of the same proof, the number of non-degenerate $\mathcal S_\kappa$-unit solutions of equation \eqref{equ:unit-equation-in-Q}, which is the key step in the first approach (``Approach 1'') to the proof of Theorem \ref{th:main}.
%
%

%
\end{document}